\newtheorem{theorem}{Theorem}
\newtheorem{lemma}[theorem]{Lemma}
\begin{document}

\title{Maximal independent sets on a grid graph}

\author[S. Oh]{Seungsang Oh}
\address{Department of Mathematics, Korea University, Seoul 02841, Korea}
\email{seungsang@korea.ac.kr}

\thanks{This research was supported by the National Research Foundation of Korea(NRF) grant funded
by the Korea government(MSIP) (No. NRF-2014R1A2A1A11050999).}

\begin{abstract}
An independent vertex set of a graph is a set of vertices of the graph in which no two vertices are adjacent,
and a maximal independent set is one that is not a proper subset of any other independent set.
In this paper we count the number of maximal independent sets of vertices on a complete rectangular grid graph.
More precisely, we provide a recursive matrix-relation producing the partition function 
with respect to the number of vertices.
The asymptotic behavior of the maximal hard square entropy constant is also provided.
We adapt the state matrix recursion algorithm,
recently invented by the author to answer various two-dimensional regular lattice 
model problems in enumerative combinatorics and statistical mechanics.
\end{abstract}

\maketitle

\section{Introduction} \label{sec:intro}

In graph theory, many problems involve subsets of the vertices of a graph
that satisfy certain restrictions based on the adjacency relations within the graphs~\cite{MS1, MS3}.
Among them, counting all maximal independent sets of a given graph is 
one that has attracted considerable attention.
In a graph $G$, 
an {\em independent vertex set\/} is a subset $S$ of its vertex set $V(G)$ such that 
there is no edge of $G$ between any two vertices of $S$.
A {\em maximal independent set\/} (MIS) is an independent vertex set
that is not a proper subset of any other independent vertex set.
In other words, it is a set $S$ such that every edge of the graph has at least one endpoint not in $S$
and every vertex not in $S$ has at least one neighbor in $S$.

Erd\"{o}s and Moser raised the problem of determining the maximum value of the number of MISs 
in a general graph with $n$ vertices and those graphs having this maximum value.
Moon and Moser~\cite{MM} presented that a graph can have at most $3^{n/3}$ MISs and that 
there are graphs achieving this many.
Later Griggs, Grinstead and Guichard~\cite{GGG} improved this result for connected graphs. 
This problem has been extensively studied for various classes of graphs, 
including trees~\cite{Sa, Wi} and graphs with at most $r$ cycles~\cite{GKSV, SV}.

Recently several significant enumeration problems regarding various combinatorial objects 
on the $m \! \times \! n$ grid graph were solved by means of the {\em state matrix recursion algorithm\/},
originated from~\cite{OHLL} and later developed by the author~\cite{OhD1}.
As one of the most interesting applications, 
this algorithm provides a recursive matrix-relation producing the exact number of
independent vertex sets on the $m \! \times \! n$ grid graph in the preceding paper~\cite{OhV1}.
This is well known as the Hard Square Problem or the Merrifield-Simmons index~\cite{MS1, MS3}.
This index is an important topological index for the study of the relation 
between molecular structure and physical/chemical properties of certain hydrocarbon compounds, 
such as the correlation with boiling points~\cite{GP}.
A good summary of results on the Merrifield-Simmons index of graphs 
can be found in the survey paper~\cite{WG}.

In this paper, we apply the state matrix recursion algorithm 
to calculate the number of MISs on the $m \! \times \! n$ rectangular grid graph $\mathbb{Z}_{m \times n}$
that is the most interesting two-dimensional regular lattice.
A MIS on $\mathbb{Z}_{m \times n}$ is drawn in Figure~\ref{fig:MIS}~(a).
An independent vertex set is often represented by a hard square lattice gas with nearest-neighbor exclusion.
In a MIS, all vertices must be covered by hard squares as in Figure~\ref{fig:MIS}~(b).
Up to now there are only partial results~\cite{Eu, EOS} on counting MISs in $\mathbb{Z}_{m \times n}$.

\begin{figure}[h]
\includegraphics{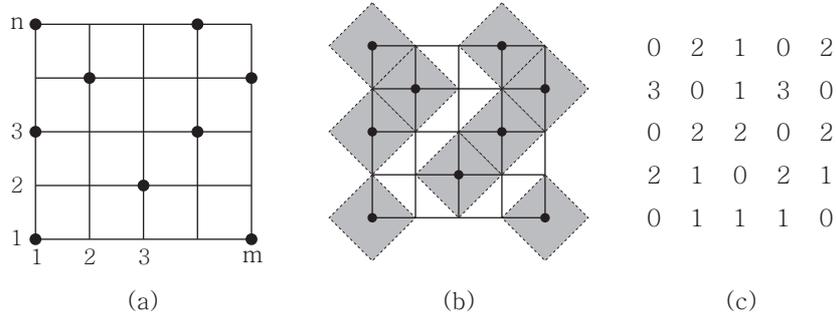}
\caption{(a) A MIS on $\mathbb{Z}_{m \times n}$.
(b) A maximal hard square lattice gas.
(c) Another 0, 1, 2, 3, 4 array form.}
\label{fig:MIS}
\end{figure}

The partition function of MISs at activity $z$ on $\mathbb{Z}_{m \times n}$ is defined by
$$P_{m \times n}(z) = \sum \, k(t) \, z^t,$$
where $k(t)$ is the number of MISs consisting of $t$ vertices.
Then, the number of MISs is
$$\sigma(\mathbb{Z}_{m \times n}) = P_{m \times n}(1).$$

It is of interest to note that $\sigma(\mathbb{Z}_{m \times n})$ is indeed the number of $m \times n$ arrays 
of five digits 0, 1, 2, 3, 4 where each entry equals the number of its horizontal and vertical zero neighbors, 
where 0's are located at the place of a MIS as in Figure~\ref{fig:MIS}~(c).
The numbers of $\sigma(\mathbb{Z}_{m \times n})$ are registered in
Sloane's On-Line Encyclopedia of Integer Sequences~\cite{Sl}, namely A197054.

By virtue of the state matrix recursion algorithm,
we present a recursive formula for this partition function.
Hereafter $\mathbb{O}_k$ denotes the $3^k \! \times \! 3^k$ zero-matrix,
and $\stackrel{m}{\otimes} A$ denotes the $m$-fold tensor product\footnote{
The matrix describing the tensor product $A \otimes B$ is the Kronecker product of the two matrices.
For example,
{\scriptsize 
$$ \begin{bmatrix} 1 & 2 \\ 3 & 4 \end{bmatrix} \otimes \begin{bmatrix} 5 \\ 6 \end{bmatrix} =
\begin{bmatrix} 1 \cdot \begin{bmatrix} 5 \\ 6 \end{bmatrix} & 2 \cdot \begin{bmatrix} 5 \\ 6 \end{bmatrix} \\ 
3 \cdot \begin{bmatrix} 5 \\ 6 \end{bmatrix} & 4 \cdot \begin{bmatrix} 5 \\ 6 \end{bmatrix} \end{bmatrix} =
\begin{bmatrix} 1 \cdot 5 & 2 \cdot 5 \\ 1 \cdot 6 & 2 \cdot 6 \\ 
3 \cdot 5 & 4 \cdot 5 \\ 3 \cdot 6 & 4 \cdot 6 \end{bmatrix} =
\begin{bmatrix} 5 & 10 \\ 6 & 12 \\ 15 & 20 \\ 18 & 24 \end{bmatrix}. $$}
} of a matrix $A$.

\begin{theorem} \label{thm:main}
The partition function $P_{m \times n}(z)$ is the unique entry of the $1 \! \times \! 1$ matrix
$$D_m \cdot (A_m + B_m)^n \cdot E_m,$$
where $A_m$ and $B_m$ are $3^m \! \times \! 3^m$ matrices recursively defined by
$$A_{k+1} = \begin{bmatrix} \mathbb{O}_k & \mathbb{O}_k & z \, C_k \\
\mathbb{O}_k & \mathbb{O}_k & \mathbb{O}_k \\
\mathbb{O}_k & \mathbb{O}_k & \mathbb{O}_k \end{bmatrix}, \
B_{k+1} = \begin{bmatrix} \mathbb{O}_k & \mathbb{O}_k & \mathbb{O}_k \\
A_k \! + \! B_k & A_k & \mathbb{O}_k \\
A_k \! + \! B_k & A_k \! + \! B_k & \mathbb{O}_k \end{bmatrix}$$
$$\text{and } \ C_{k+1} = \begin{bmatrix} \mathbb{O}_k & \mathbb{O}_k & \mathbb{O}_k \\
A_k \! + \! B_k &A_k \! + \! B_k & \mathbb{O}_k \\
A_k \! + \! B_k & A_k \! + \! B_k & \mathbb{O}_k \end{bmatrix},$$
for $k=0, \dots, m \! - \! 1$,
starting with 
$A_0 = \begin{bmatrix} 0 \end{bmatrix}$
and $B_0 = C_0 = \begin{bmatrix} 1 \end{bmatrix}$,
and $D_m$ and $E_m$ are respectively $1 \! \times \! 3^m$ and $3^m \! \times \! 1$ matrices defined by
$$D_m = \ \stackrel{m}{\otimes} \begin{bmatrix} 1 & 1 & 0 \end{bmatrix} \text{ and } \
E_m = \ \stackrel{m}{\otimes} \begin{bmatrix} 0 \\ 1 \\ 1 \end{bmatrix}.$$
\end{theorem}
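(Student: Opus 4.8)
The plan is to realize $D_m\,(A_m+B_m)^n\,E_m$ as a transfer-matrix count in which the columns of $\mathbb{Z}_{m\times n}$ are processed from left to right, following the state matrix recursion algorithm. First I would attach to every vertex a \emph{state} in $\{1,2,3\}$ recording just enough information to check independence and maximality locally across the vertical seam between two consecutive columns: state $1$ means the vertex lies in the MIS; state $2$ means the vertex is not in the MIS and its right neighbor is also not in the MIS; and state $3$ means the vertex is not in the MIS while its right neighbor is in the MIS. A column of height $m$ is then described by a word in $\{1,2,3\}^m$, giving the $3^m$ indices of the matrices, and the activity $z$ is charged once for each vertex placed in the MIS, so a completed array of states contributes exactly $z^{t}$, where $t$ is the number of $1$'s.

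Second I would read off the entries of $A_m+B_m$ as the per-column compatibility weights. Between a column in state $s$ (indexing the columns of the matrix) and the next column in state $s'$ (indexing the rows), the admissible transitions must enforce, row by row, horizontal independence (no two horizontally adjacent $1$'s), consistency of the ``right neighbor in the MIS'' bit carried by states $2$ and $3$, and the horizontal part of maximality: a vertex not in the MIS must receive a neighbor in the MIS from its left ($s=1$), from its right (state $3$), or, once $m>1$, from above or below. The lone entry carrying a factor $z$ records the selection forced when the left neighbor is still undominated.

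The heart of the argument is the inner recursion on $k$, which builds each column one vertex at a time from the bottom up and must weave the vertical constraints into the same bookkeeping. I would prove by induction on $k$ the invariant that $A_k$, $B_k$, $C_k$ are the partial transfer matrices for the bottom $k$ rows under the three mutually exclusive situations of the current top vertex (row $k$): in $A_k$ the top vertex is in the MIS; in $B_k$ it is not in the MIS and its maximality is already secured by a left or lower neighbor, or else is deferred to its right neighbor; and in $C_k$ it is not in the MIS and is still \emph{awaiting domination from above}. With this reading each block of the displayed recursion is forced. Placing a new top vertex in the MIS (the matrix $A_{k+1}$) requires the vertex just below it to be outside the MIS and at the same time discharges an ``awaiting from above'' obligation, which is exactly why $C_k$ appears, weighted by $z$. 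Placing a new top vertex outside the MIS (the matrix $B_{k+1}$) leaves the vertex below without domination from above, so it may be combined only with $A_k$ or $B_k$, never with $C_k$, and the three surviving blocks encode the new vertex's own state together with vertical independence and the vertical share of maximality; the auxiliary $C_{k+1}$ is assembled in the same way but keeps the new top vertex's obligation open. I would check the base case $A_0=[0]$, $B_0=C_0=[1]$ directly and then verify all nine block positions against this case analysis.

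The main obstacle is precisely this nine-block verification, because maximality couples the vertical and horizontal directions and one must show that each vertex's domination is certified exactly once---by a left, right, upper, or lower MIS neighbor---with no case omitted and none double counted. The delicate points are the distinction between states $2$ and $3$ (the right-neighbor bit), which explains why the transition from state $2$ to state $2$ is absent from $B_{k+1}$ yet present inside $C_{k+1}$, where the missing neighbor is instead supplied from above, and the strict separation of $C_k$ from $B_k$ under a selected upper vertex. Finally I would install the boundary vectors: $E_m$, the rightmost factor, seeds the first column by enumerating each vertex's membership, and its zero in the first slot forbids a phantom MIS vertex to the left; $D_m$, the leftmost factor, reads out the last column, and its zero in the third slot discards every configuration in which a vertex still demands a right neighbor that does not exist. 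Together these kill exactly the non-maximal boundary configurations, so the scalar $D_m\,(A_m+B_m)^n\,E_m$ collects $z^{t}$ over all MISs of $\mathbb{Z}_{m\times n}$, which is the assertion of the theorem.
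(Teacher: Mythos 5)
Your overall architecture is exactly the paper's own: a transfer matrix whose $n$-th power sweeps the grid one cross-section at a time, with the per-section factor $A_m+B_m$ assembled by an inner vertex-by-vertex recursion on three letters, and boundary vectors $D_m$, $E_m$ killing phantom claims at the two ends (the paper does the same with rows and columns interchanged, encoding the states as edge letters a, b, c of mosaic tiles). The genuine problem is the invariant you propose for the inner recursion --- the step you yourself call the heart of the proof and defer to a ``nine-block verification.'' You define the three \emph{mutually exclusive} situations of the current top vertex as: $A_k$, in the MIS; $B_k$, not in the MIS with maximality already secured by a left or lower neighbor or deferred to the right neighbor; $C_k$, not in the MIS and still awaiting domination from above. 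This classifies by whether domination has already been achieved. The invariant actually encoded in the theorem's recursion (and in the paper's edge letters) classifies instead by the \emph{committed membership of the not-yet-placed vertex above}: $B_k$ means ``the vertex above will not be in the MIS'' (which, by maximality, then forces domination from left, lower, or right), while $C_k$ means ``the vertex above will be in the MIS,'' irrespective of whether the top vertex is also dominated elsewhere. The two classifications disagree precisely on configurations in which the top vertex is dominated by an already-placed neighbor and its upper neighbor is in the MIS: the theorem needs these counted in $C_k$; your mutual-exclusivity forces them into $B_k$.

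This is not cosmetic; with your invariant the block verification fails and the theorem's formula becomes false. Concretely, take $m=3$, $n=1$ and the MIS $\{1,3\}$ of the path (bottom and top vertices). The middle vertex is dominated by the bottom vertex, so under your definition the partial column of height $2$ lies in $B_2$, not $C_2$; but the only nonzero block of $A_3$ is $z\,C_2$, so this MIS is never counted and you would get $\sigma(\mathbb{Z}_{3\times 1})=1$ instead of $2$. Symmetrically, your reading contradicts the stated blocks of $C_{k+1}$: those blocks in which the new top vertex's seam states already exhibit a MIS neighbor to its left or right are asserted to equal $A_k+B_k$, yet such a vertex is not ``awaiting domination from above,'' so under your definition those blocks would have to vanish. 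Note that your own block justifications (e.g., that the $2\to2$ transition carries $A_k$ inside $B_{k+1}$ but $A_k+B_k$ inside $C_{k+1}$ because the missing neighbor is ``supplied from above'') tacitly use the correct commitment semantics, so the write-up is internally inconsistent. The fix is to restate the invariant as a statement about the upper neighbor's membership and to make that commitment part of the state, so that one partial configuration dominated from below legitimately appears once in $B_k$ and once in $C_k$, tagged by the two possible futures; with that change the rest of your plan goes through and coincides with the paper's proof.
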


This partition function gives the following significant consequences:
The lowest degree of $P_{m \times n}(z)$ indicates the minimum number of vertices to produce a MIS,
and its coefficient is the number of MISs with fewest set. 

Note that the square matrices $A_m$ and $B_m$ are exponentially large;
this is exactly what one would expect just from applying the standard transfer matrix method
used for other lattice models.
But these matrices are significantly smaller than what would be obtained 
from naive application of the transfer matrix method (See Table~\ref{tab:list}).

\begin{table}[h]
{\footnotesize \begin{tabular}{ccccccccc}      \hline \hline
 & $n=1$ & $n=2$ & $n=3$ & $n=4$ & $n=5$ & $n=6$ & $n=7$ & $n=8$ \\    \hline
$m=1$ & 1 &   &   &   &   &   &   & \\
$m=2$ & 2 & 2 &   &   &   &   &   & \\
$m=3$ & 2 & 4 & 10 &   &   &   &   & \\
$m=4$ & 3 & 6 & 18 & 42 &   &   &   & \\
$m=5$ & 4 & 10 & 38 & 108 & 358 &   &   & \\
$m=6$ & 5 & 16 & 78 & 274 & 1132 & 4468 &   &  \\
$m=7$ & 7 & 26 & 156 & 692 & 3580 & 17742 & 88056 &  \\
$m=8$ & 9 & 42 & 320 & 1754 & 11382 & 70616 & 439338 & 2745186 \\
$m=9$ & 12 & 68 & 654 & 4442 & 36270 & 281202 & 2192602 & 17155374 \\
$m=10$ & 16 & 110 & 1326 & 11248 & 114992 & 1117442 & 10912392 & 106972582 \\  \hline \hline
\end{tabular}}
\vspace{4mm}
\caption{List of the exact numbers of $\sigma(\mathbb{Z}_{m \times n})$}
\label{tab:list}
\end{table}

We are turning now to the growth rate per vertex of the number of MISs 
$\sigma(\mathbb{Z}_{m \times n})$ as defined by
$$ \lim_{m, \, n \rightarrow \infty} (\sigma(\mathbb{Z}_{m \times n}))^{\frac{1}{mn}} = \kappa.$$
We call this limit $\kappa$ the {\em maximal hard square entropy constant\/}.
A two-dimensional application of Fekete's lemma gives 
a mathematical proof of the existence of the limit.

\begin{theorem}\label{thm:growth}
The maximal hard square entropy constant $\kappa$ exists.
More precisely, 
$$ \kappa = \sup_{m, \, n \geq 1} (\sigma(\mathbb{Z}_{m \times n}))^{\frac{1}{(m+1)(n+1)}}.$$
\end{theorem}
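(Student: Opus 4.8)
The plan is to deduce the formula for $\kappa$ from a single \emph{gluing inequality} that is supermultiplicative up to a one-line border, and then to feed this inequality into a two-dimensional version of Fekete's lemma. Writing $a_{m,n}=\sigma(\mathbb{Z}_{m\times n})$, the inequality I would aim for is
\begin{equation}
a_{m_1+m_2+1,\,n}\ \ge\ a_{m_1,n}\cdot a_{m_2,n}\qquad(m_1,m_2,n\ge 1),
\label{eq:glue}
\end{equation}
together with its transpose $a_{m,\,n_1+n_2+1}\ge a_{m,n_1}\,a_{m,n_2}$ (legitimate since $a_{m,n}=a_{n,m}$). The extra row (resp.\ column) recorded by the $+1$ is exactly the buffer needed to repair maximality at the interface, and it is also what produces the exponent $(m+1)(n+1)$ rather than $mn$ in the final statement.

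To establish \eqref{eq:glue} I would build an explicit injection from pairs $(S_1,S_2)$ of MISs on $\mathbb{Z}_{m_1\times n}$ and $\mathbb{Z}_{m_2\times n}$ into MISs on $\mathbb{Z}_{(m_1+m_2+1)\times n}$. Place $S_1$ in the top $m_1$ rows, place $S_2$ in the bottom $m_2$ rows, and leave the separating row $m_1+1$ to be filled. Because each $S_i$ is already maximal inside its own block, every absent vertex of the top and bottom blocks is dominated from within its block, so the only condition that can fail is domination of the middle row. Call a column $j$ \emph{uncovered} if neither its neighbor in row $m_1$ (from $S_1$) nor its neighbor in row $m_1+2$ (from $S_2$) lies in the set, and let $V\subseteq\{1,\dots,n\}$ be the set of uncovered columns. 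Global independence forces any inserted middle-row vertex to sit in a column of $V$, so I would take the middle-row occupancy set $M$ to be a maximal independent subset of $V$ with respect to horizontal adjacency. Then $M\subseteq V$ keeps the whole configuration independent, while maximality of $M$ guarantees that every uncovered column is either occupied or horizontally adjacent to an occupied one; hence the configuration is an MIS. Since $M$ is determined by $(S_1,S_2)$ (choose, say, the lexicographically least such $M$) and $S_1,S_2$ are recovered by restricting to the top and bottom blocks, the map is well defined and injective, which proves \eqref{eq:glue}. This middle-row repair is the heart of the matter and the step I expect to be the main obstacle: one must check that a single maximal independent subset of $V$ simultaneously preserves independence against both blocks and restores domination of the entire buffer row.

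With \eqref{eq:glue} in hand the remainder is bookkeeping with Fekete's lemma. Passing to $c_{m,n}=\log a_{m,n}$ and setting $\psi(m,n)=c_{m-1,n-1}$ for $m,n\ge 2$ absorbs the $+1$: inequality \eqref{eq:glue} becomes ordinary superadditivity $\psi(m_1+m_2,n)\ge\psi(m_1,n)+\psi(m_2,n)$ in the first coordinate, and its transpose gives the same in the second. Separate superadditivity in each coordinate yields the joint bound $\psi(m_1+m_2,n_1+n_2)\ge\psi(m_1,n_1)+\psi(m_2,n_1)+\psi(m_1,n_2)+\psi(m_2,n_2)$, and the standard two-dimensional Fekete argument (tiling an $m\times n$ rectangle by $p\times q$ blocks, with the small remainders absorbed into neighboring blocks) gives
\begin{equation}
\lim_{m,n\to\infty}\frac{\psi(m,n)}{mn}=\sup_{m,n\ge 2}\frac{\psi(m,n)}{mn}.
\label{eq:fekete}
\end{equation}
Undoing the substitution turns \eqref{eq:fekete} into $\lim_{m,n\to\infty}\frac{\log a_{m,n}}{(m+1)(n+1)}=\sup_{m,n\ge1}\frac{\log a_{m,n}}{(m+1)(n+1)}$. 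Finally, because $\frac{(m+1)(n+1)}{mn}\to 1$, the sequence $\frac{\log a_{m,n}}{mn}$ has the same limit; exponentiating identifies this common value with $\kappa$ and with $\sup_{m,n\ge1}a_{m,n}^{1/((m+1)(n+1))}$, as claimed.
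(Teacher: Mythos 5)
Your proposal follows essentially the same route as the paper: the supermultiplicative gluing inequality $\sigma(\mathbb{Z}_{m_1\times n})\cdot\sigma(\mathbb{Z}_{m_2\times n})\le\sigma(\mathbb{Z}_{(m_1+m_2+1)\times n})$ obtained by inserting a one-line buffer, followed by a Fekete-type lemma adapted to the shift by one. Your middle-row repair is exactly the paper's construction: the paper shades the positions of the inserted line that are not dominated from either side (your set $V$) and fills each consecutive shaded run with a canonical pattern of occupied tiles, which is precisely a distinguished maximal independent subset of $V$; your verification of independence, maximality, and injectivity is correct, and is in fact spelled out in more detail than the paper's version. Where the paper then invokes a ready-made statement (Lemma~\ref{lem:growth}, quoted from \cite[Lemma~7]{OhD1}), you re-derive it via the index shift $\psi(m,n)=\log a_{m-1,n-1}$, which is the natural way to prove that lemma, so nothing essential differs there.

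One point is genuinely missing, and it matters because the theorem asserts that $\kappa$ \emph{exists}: you never check that the supremum is finite. Both Lemma~\ref{lem:growth} (whose conclusion holds only ``provided that the supremum exists'') and your own final step --- transferring the limit from the normalization $(m+1)(n+1)$ to $mn$ ``because the ratio tends to $1$'' --- require the common value to be a finite real number; without an upper bound, your chain of equalities only identifies limit and supremum in $[0,\infty]$, which does not yet prove existence of $\kappa$. The paper closes this by noting that at most sixteen tile choices occur per site, so the supremum is at most $16$; in your formulation the cheapest fix is the trivial bound $\sigma(\mathbb{Z}_{m\times n})\le 2^{mn}$, giving $\log a_{m,n}/(mn)\le\log 2$. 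You should also record the equally trivial lower bound $\sigma(\mathbb{Z}_{m\times n})\ge 1$ (every finite graph has a maximal independent set, by greedy extension): it guarantees $\log a_{m,n}\ge 0$, which is what lets the remainder blocks be discarded in the ``standard two-dimensional Fekete argument'' you appeal to, and it is stated explicitly at the start of the paper's proof. Both fixes are one-liners, but as written the proposal does not establish the existence claim.
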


We approximate the maximal hard square entropy constant as little greater than 
$1.225084\cdots$ from $\sigma(\mathbb{Z}_{8 \times 380}) = 2.0932\cdots \! \times \! 10^{302}$.

\section{State matrix recursion algorithm}  

In this section we prove Theorem~\ref{thm:main} by means of the state matrix recursion algorithm.
This algorithm is divided into three stages.
The first stage is devoted to the installation of the mosaic system for MISs on the grid graph.
Note that the original construction of the mosaic system for quantum knots 
was invented by Lomonaco and Kauffman to represent an actual physical quantum system~\cite{LK}.
Recently, the author {\em et al\/}. have developed a state matrix argument for knot mosaic enumeration
in a series of papers \cite{HLLO, HO, Oh1, OHLL, OHLLY}.
The state matrix recursion algorithm is a well-formalized version of this argument
to answer various two-dimensional square lattice model problems 
in enumerative combinatorics and statistical mechanics~\cite{OhD1, OhV1}.
In the second stage, we find recursive matrix-relations producing the exact enumeration.
Our proofs of two lemmas in this stage parallel those of Lemmas~3 and 4 in~\cite{OhV1}, 
with slight modification to MISs.
In the third stage, we analyze the state matrix obtained in the second stage to complete the proof.

\subsection{Stage 1: Conversion to the MIS mosaic system} \hspace{10mm}

In this paper, we consider the sixteen {\em mosaic tiles\/} illustrated in Figure~\ref{fig:tile}.
Their side edges are labeled with three letters a, b and c.
Note that the dot in the first mosaic tile $T_1$ indicates a vertex in a MIS.
In detail, $T_1$ has four side edges labeled with one letter a only. 
All the other fifteen mosaic tiles have side edges labeled with letters b and c, 
except that four b's are not allowed.

\begin{figure}[h]
\includegraphics{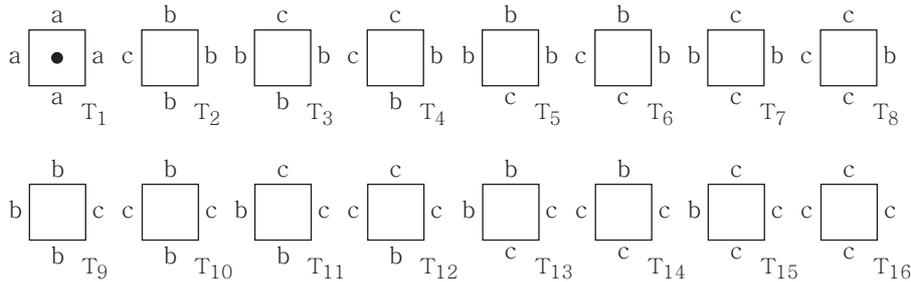}
\caption{Sixteen mosaic tiles labeled with three letters}
\label{fig:tile}
\end{figure}

For positive integers $m$ and $n$,
an {\em $m \! \times \! n$--mosaic\/} is an $m \! \times \! n$ rectangular array $M = (M_{ij})$ of those tiles,
where $M_{ij}$ denotes the mosaic tile placed at the $i$th column from `left' to `right'
and the $j$th row from `bottom' to `top'. 
We are exclusively interested in mosaics whose tiles match each other properly to represent MISs.
For this purpose we consider the following rules. \\

\noindent {\bf Adjacency rule (abc--cba type):\/}
Adjacent edges of adjacent mosaic tiles in a mosaic
must be labeled with any of the following pairs of letters: a/c, b/b. \\

\noindent {\bf Boundary state requirement:\/}
All boundary edges in a mosaic are labeled with letters a and b (but, not c). \\

The meaning of the abc--cba type is that 
if the labeled state of one side of a pair of adjacent edges is a (b or c), then the other side is c (b or a, respectively).
As illustrated in Figure~\ref{fig:conversion},
every MIS in $\mathbb{Z}_{m \times n}$ can be converted into 
an $m \! \times \! n$--mosaic which satisfies the two rules.
According to the adjacency rule, we avoid putting two mosaic tiles $T_1$ next to each other
(to be an independent vertex set),
and guarantee that each of fifteen mosaic tiles $T_2 \! \sim \! T_{16}$ has
at least one $T_1$ as neighbors (to be maximal). 

\begin{figure}[h]
\includegraphics{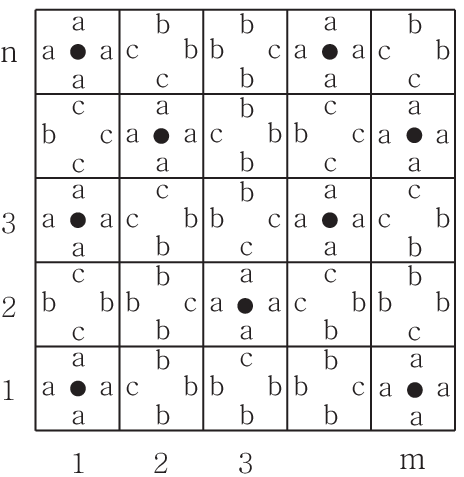}
\caption{Conversion of the MIS in Figure~\ref{fig:MIS} to a MIS $m \! \times \! n$--mosaic}
\label{fig:conversion}
\end{figure}

A mosaic is said to be {\em suitably adjacent\/} if any pair of mosaic tiles
sharing an edge satisfies the adjacency rule.
A suitably adjacent $m \! \times \! n$--mosaic is called a {\em MIS $m \! \times \! n$--mosaic\/} 
if it additionally satisfies the boundary state requirement.
The boundary state requirement guarantees the uniqueness of a MIS $m \! \times \! n$--mosaic
representing a given MIS.
The following one-to-one conversion arises naturally. \\

\noindent {\bf One-to-one conversion:\/}
There is a one-to-one correspondence between  
MISs on $\mathbb{Z}_{m \times n}$ and MIS $m \! \times \! n$--mosaics.
Furthermore, the number of vertices in a MIS is equal to
the number of $T_1$ mosaic tiles in the corresponding MIS $m \! \times \! n$--mosaic.

\subsection{Stage 2: State matrix recursion formula} \hspace{10mm}

Let $p \leq m$ and $q \leq n$ be positive integers.
Consider a suitably adjacent $p \! \times \! q$--mosaic $M$, possibly labeled c on boundary edges.
We use $v(M)$ to denote the number of appearances of $T_1$ tiles in $M$.
A {\em state\/} is a finite sequence of three letters a, b and c.
The {\em $b$--state\/} $s_b(M)$ ({\em $t$--state\/} $s_t(M)$) is 
the state of length $p$ obtained by reading off letters on the bottom (top, respectively) 
boundary edges from right to left, 
and the {\em $l$--state\/} $s_l(M)$ ({\em $r$--state\/} $s_r(M)$) is 
the state of length $q$ on the left (right, respectively) boundary edges from top to bottom.
For example, the MIS $5 \! \times \! 5$--mosaic drawn in Figure~\ref{fig:conversion} has four state indications:
$s_b =$ abbba, $s_t =$ babba, $s_l =$ ababa, and $s_r =$ babba.

Given a triple $\langle s_r, s_b, s_t \rangle$ of $r$--, $b$-- and $t$--states,
we associate the {\em state polynomial\/}:
$$ Q_{\langle s_r, s_b, s_t  \rangle}(z) = \sum k(v) \, z^v, $$
where $k(v)$ is the number of all suitably adjacent $p \! \times \! q$--mosaics $M$ such that 
$v(M)=v$, $s_r(M) = s_r$, $s_b(M) = s_b$, $s_t(M) = s_t$
and $s_l(M)$ is any state of length $q$ consisting of only two letters a and b.
The last condition for $s_l(M)$ is due to the left boundary state requirement.

Now we focus on mosaics of width 1.
Consider a suitably adjacent $p \! \times \! 1$--mosaic ($p \leq m$), which is called a {\em bar mosaic\/}.
Bar mosaics of length $p$ have $3^p$ kinds of $b$-- and $t$--states, especially called {\em bar states\/}.
We arrange all bar states in two ways as follows: for example if $p=2$,
the abc-ordered state set as aa, ab, ac, ba, bb, bc, ca, cb and cc (the lexicographic order),
and the cba-ordered state set as cc, cb, ca, bc, bb, ba, ac, ab and aa (the reverse lexicographic order).
For $1 \leq i \leq 3^p$, let $\epsilon^p_i$ and $\lambda^p_i$ denote the $i$th bar states of length $p$
among the abc- and cba-ordered state sets, respectively.

The {\em bar state matrix\/} $X_p$ ($X = A, B, C$)
for the set of suitably adjacent bar mosaics of length $p$ is a $3^p \! \times \! 3^p$ matrix $(m_{ij})$ given by  
$$ m_{ij} = Q_{\langle \text{x}, \epsilon^p_i, \lambda^p_j \rangle}(z) $$
where x $=$ a, b, c, respectively.
We remark that information on suitably adjacent bar mosaics is completely encoded 
in the three bar state matrices $A_p$, $B_p$ and $C_p$.

\begin{lemma} \label{lem:bar}
The bar state matrices $A_p$, $B_p$ and $C_p$ are obtained by the recurrence relations:
$$A_{k+1} = \begin{bmatrix} \mathbb{O}_k & \mathbb{O}_k & z \, C_k \\
\mathbb{O}_k & \mathbb{O}_k & \mathbb{O}_k \\
\mathbb{O}_k & \mathbb{O}_k & \mathbb{O}_k \end{bmatrix}, \mbox{ }
B_{k+1} = \begin{bmatrix} \mathbb{O}_k & \mathbb{O}_k & \mathbb{O}_k \\
A_k \! + \! B_k & A_k & \mathbb{O}_k \\
A_k \! + \! B_k & A_k \! + \! B_k & \mathbb{O}_k \end{bmatrix}$$
$$\text{and } \ C_{k+1} = \begin{bmatrix} \mathbb{O}_k & \mathbb{O}_k & \mathbb{O}_k \\
A_k \! + \! B_k &A_k \! + \! B_k & \mathbb{O}_k \\
A_k \! + \! B_k & A_k \! + \! B_k & \mathbb{O}_k \end{bmatrix}$$
with seed matrices
$A_1 = {\small \begin{bmatrix} 0 & 0 & z \\ 0 & 0 & 0 \\ 0 & 0 & 0 \end{bmatrix}}, \
B_1 = {\small \begin{bmatrix} 0 & 0 & 0 \\ 1 & 0 & 0 \\ 1 & 1 & 0 \end{bmatrix}} \mbox{ and\, }
C_1 = {\small \begin{bmatrix} 0 & 0 & 0 \\ 1 & 1 & 0 \\ 1 & 1 & 0 \end{bmatrix}}.$
\end{lemma}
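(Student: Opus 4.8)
The plan is to prove the recurrence by a peeling decomposition: every suitably adjacent bar mosaic of length $k+1$ is the concatenation of its rightmost tile with a suitably adjacent bar mosaic of length $k$ occupying the remaining left portion. First I would fix the bookkeeping between the state-reading conventions and the $3 \times 3$ block structure of the matrices. Since the $b$-- and $t$--states are read from right to left, removing the rightmost tile deletes precisely the first letter of both $s_b$ and $s_t$. In the abc--order the $3^{k+1}$ bottom states split into three consecutive blocks of size $3^k$ according to whether the first letter is a, b or c, so the first letter pins down the block row $I \in \{1,2,3\}$; in the cba--order the first letter c, b, a pins down the block column $J \in \{1,2,3\}$; and the remaining $k$ letters index $\epsilon^k_{i'}$ and $\lambda^k_{j'}$ inside the block. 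Thus the $(I,J)$ block of $X_{k+1}$ is a sum, over all admissible rightmost tiles whose bottom and top letters are dictated by $I$ and $J$, of $z^{v(\text{tile})}$ times a length--$k$ bar state matrix recording the shorter mosaic.

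Next I would identify which length--$k$ matrix each rightmost tile routes to. The rightmost tile shares its left edge $\ell$ with the right boundary of the shorter bar, and the adjacency rule forces that boundary state to be the complement $\bar\ell$ under a$\leftrightarrow$c, b$\leftrightarrow$b, c$\leftrightarrow$a. Hence a rightmost tile with left edge $\ell$ contributes $A_k$, $B_k$ or $C_k$ according as $\bar\ell = $ a, b or c, that is, according as $\ell = $ c, b or a. The left boundary requirement $s_l \in \{\text{a},\text{b}\}$ is inherited automatically, because peeling the rightmost tile leaves the left boundary of the whole bar untouched, so the inductive use of the state polynomials (which already encode this requirement) is legitimate.

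The computational heart is then a short tile census. I would use two facts from Stage 1: $T_1$ is the unique tile carrying label a, it carries a on all four edges and has $v(T_1)=1$; and $T_2, \dots, T_{16}$ are exactly the $2^4 - 1 = 15$ assignments of b, c to the four edges other than the all--b assignment, each with $v=0$. For $A_{k+1}$ the right edge is a, which only $T_1$ provides, forcing bottom $=$ top $=$ left $=$ a, so the block $(1,3)$ equals $z\,C_k$ (the left edge a routes to $C_k$) and all other blocks vanish. For $B_{k+1}$ and $C_{k+1}$ the right edge is b or c, so the rightmost tile lies among $T_2, \dots, T_{16}$, its bottom and top letters range over $\{\text{b},\text{c}\}$ (confining nonzero blocks to rows $I \in \{2,3\}$ and columns $J \in \{1,2\}$), and summing over $\ell \in \{\text{b},\text{c}\}$ yields $A_k + B_k$ in each such block except where the forbidden all--b tile must be excluded. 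That exception occurs for $B_{k+1}$ only in block $(2,2)$, where right, bottom and top are all b so $\ell = $ b is outlawed and only $A_k$ survives; every other admissible block of $B_{k+1}$, and every admissible block of $C_{k+1}$, equals $A_k + B_k$. This reproduces the three stated matrices.

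Finally I would establish the seed matrices by the same census applied to a single tile ($p=1$), where the length--$1$ states are read directly: right $=$ a forces $T_1$ and yields the lone entry $z$ in position $(1,3)$ of $A_1$; right $=$ b or c restricts to $T_2,\dots,T_{16}$ with left $=$ b forced by the boundary requirement, and the ``not all b'' rule removes exactly the $(2,2)$ entry of $B_1$, giving the displayed $B_1$ and $C_1$. I expect the main obstacle to be purely organizational rather than deep: keeping the right-to-left reading orientation, the adjacency complement $\ell \mapsto \bar\ell$, and the left-boundary inheritance all mutually consistent so that the block positions and the routing to $A_k, B_k, C_k$ come out correctly. Once that bookkeeping is fixed, the enumeration is immediate.
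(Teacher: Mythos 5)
Your proof is correct and follows essentially the same route as the paper: peel off the rightmost tile of a length-$(k+1)$ bar mosaic, use the abc/cba orderings to identify block rows and columns by the first letters of the bottom and top states, and route each admissible rightmost tile to $A_k$, $B_k$ or $C_k$ via the adjacency rule (the paper records the resulting 27 cases in a table, while you compress them into a routing rule plus the ``all-b forbidden'' exception, which is only an organizational difference). Your verification of the seed matrices and of the automatic inheritance of the left boundary requirement likewise matches the paper's argument.
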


Note that we may start with matrices
$A_0 = \begin{bmatrix} 0 \end{bmatrix}$ and $B_0 = C_0 = \begin{bmatrix} 1 \end{bmatrix}$
instead of $A_1$, $B_1$ and $C_1$.

\begin{proof}
The following proof parallels the inductive proof of \cite[Lemma~3]{OhV1} with slight modification.
By observing the sixteen mosaic tiles,
we find the first bar state matrices $A_1$, $B_1$ and $C_1$ as in the lemma.
For example, $(1,3)$-entry of $A_1$ is 
$$ Q_{\langle \text{a}, \epsilon^1_1, \lambda^1_3 \rangle}(z) =
Q_{\langle \text{a}, \text{a}, \text{a} \rangle}(z) = z $$
since exactly one mosaic tile $T_1$ satisfies this requirement.

Assume that the bar state matrices $A_k$, $B_k$ and $C_k$ satisfy the statement.
Consider the matrix $B_{k+1}$, which is of size $3^{k+1} \! \times \! 3^{k+1}$.
Partition this matrix into nine block submatrices of size $3^k \! \times \! 3^k$, 
and consider the 21-submatrix of $B_{k+1}$ 
i.e., the $(2,1)$-component in the $3 \! \times \! 3$ array of the nine blocks.
Due to the abc- and cba-orders, the $(i,j)$-entry of the 21-submatrix is the state polynomial 
$Q_{\langle \text{b}, \text{b}\epsilon^k_i, \text{c}\lambda^k_j \rangle}(z)$
where b$\epsilon^k_i$ (similarly c$\lambda^k_j$) is a bar state of length $k \! + \! 1$
obtained by concatenating two bar states b and $\epsilon^k_i$.
A suitably adjacent $(k \! + \! 1) \! \times \! 1$--mosaic corresponding to 
this triple $\langle \text{b}, \text{b}\epsilon^k_i, \text{c}\lambda^k_j \rangle$
must have a tile either $T_3$ or $T_4$ at the place of the rightmost mosaic tile, 
and so its second rightmost tile must have $r$--state b or a, respectively, by the adjacency rule.
By considering the contribution of the rightmost tile $T_3$ or $T_4$ to the state polynomial,
one easily gets
$$Q_{\langle \text{b}, \text{b}\epsilon^k_i, \text{c}\lambda^k_j \rangle}(z) = 
(i,j)\text{-entry of } (A_k + B_k).$$
Thus the 21-submatrix of $B_{k+1}$ is $A_k + B_k$.
Using the same argument, 
we derive Table~\ref{tab:barset} presenting all possible twenty seven cases as desired.
\end{proof}

\begin{table}[h]
\begin{tabular}{ccccc}      \hline \hline
 & \ \ & {\em Submatrix for\/} $\langle s_r, s_b, s_t \rangle$ & {\em Rightmost tile\/} &
{\em Submatrix\/} \\    \hline
\multirow{1}{3mm}{$A_{k+1}$}
 & & 13-submatrix $\langle \text{a}, \text{a} \! \cdot \! \cdot,\text{a} \! \cdot \! \cdot \rangle$
 & $T_1$ & $z \, C_k$ \\  \hline
\multirow{4}{3mm}{$B_{k+1}$}
 & & 21-submatrix $\langle \text{b}, \text{b} \! \cdot \! \cdot,\text{c} \! \cdot \! \cdot \rangle$ 
 & $T_3$, $T_4$ &  $A_k + B_k$ \\
 & & 22-submatrix $\langle \text{b}, \text{b} \! \cdot \! \cdot,\text{b} \! \cdot \! \cdot \rangle$ 
 & $T_2$ & $A_k$ \\
 & & 31-submatrix $\langle \text{b}, \text{c} \! \cdot \! \cdot,\text{c} \! \cdot \! \cdot \rangle$ 
 & $T_7$, $T_8$ & $A_k + B_k$ \\
 & & 32-submatrix $\langle \text{b}, \text{c} \! \cdot \! \cdot,\text{b} \! \cdot \! \cdot \rangle$ 
 & $T_5$, $T_6$ & $A_k + B_k$ \\  \hline
\multirow{4}{3mm}{$C_{k+1}$}
 & & 21-submatrix $\langle \text{c}, \text{b} \! \cdot \! \cdot,\text{c} \! \cdot \! \cdot \rangle$ 
 & $T_{11}$, $T_{12}$ & $A_k + B_k$ \\
 & & 22-submatrix $\langle \text{c}, \text{b} \! \cdot \! \cdot,\text{b} \! \cdot \! \cdot \rangle$ 
 & $T_9$, $T_{10}$ & $A_k + B_k$ \\
 & & 31-submatrix $\langle \text{c}, \text{c} \! \cdot \! \cdot,\text{c} \! \cdot \! \cdot \rangle$ 
 & $T_{15}$, $T_{16}$ & $A_k + B_k$ \\
 & & 32-submatrix $\langle \text{c}, \text{c} \! \cdot \! \cdot,\text{b} \! \cdot \! \cdot \rangle$ 
 & $T_{13}$, $T_{14}$ & $A_k + B_k$ \\  \hline
 & & The other 18 cases & None & $\mathbb{O}_k$ \\     \hline \hline
\end{tabular}
\vspace{4mm}
\caption{27 submatrices of $A_{k+1}$, $B_{k+1}$ and $C_{k+1}$}
\label{tab:barset}
\end{table}

Now we extend to mosaics of any width.
The {\em state matrix\/} $Y_{m \times q}$ for the set of suitably adjacent $m \! \times \! q$--mosaics ($q \leq n$) 
is a $3^m \! \times \! 3^m$ matrix $(y_{ij})$ given by 
$$ y_{ij} = \sum Q_{\langle s_r, \epsilon^m_i, \lambda^m_j \rangle}(z), $$
where the summation is taken over all $r$--states $s_r$ of length $q$ consisting of only two letters a and b.
The summation condition of $s_r$ is due to the right boundary state requirement.

\begin{lemma} \label{lem:mn}
The state matrix $Y_{m \times n}$ is obtained by
$$Y_{m \times n} = (A_m + B_m)^n.$$
\end{lemma}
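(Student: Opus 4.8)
\emph{Plan.} The plan is to prove the identity by induction on $n$, showing at each step that adjoining one more bar mosaic on top of an $m \times q$--mosaic corresponds to right--multiplication by $A_m + B_m$. For the \emph{base case} $q=1$, the defining sum for $Y_{m \times 1}$ runs over all $r$--states of length $1$ in $\{a,b\}$, so its $(i,j)$--entry is $Q_{\langle \text{a}, \epsilon^m_i, \lambda^m_j \rangle}(z) + Q_{\langle \text{b}, \epsilon^m_i, \lambda^m_j \rangle}(z)$, which is exactly the $(i,j)$--entry of $A_m + B_m$ by the definition of the bar state matrices. Hence $Y_{m \times 1} = A_m + B_m$, matching the claimed formula for $n=1$.

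\emph{Inductive step.} I would establish the recurrence $Y_{m \times (q+1)} = Y_{m \times q} \cdot (A_m + B_m)$. First I would note that every suitably adjacent $m \times (q+1)$--mosaic splits uniquely into its bottom $q$ rows (a suitably adjacent $m \times q$--mosaic) and its top row (a bar mosaic of length $m$), and conversely any such pair whose shared horizontal edges match under the adjacency rule glues back to a legitimate $(q+1)$--mosaic. The adjacency rule (abc--cba type) forces the $b$--state of the top bar to be the complement of the $t$--state of the lower block under the involution $\text{a} \leftrightarrow \text{c}$, $\text{b} \leftrightarrow \text{b}$, while vertex counts are additive across the two blocks, so the associated state polynomials multiply.

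The crux is the bookkeeping of the two orderings. The key observation I would isolate and verify is that $\lambda^m_j$ is precisely the complement of $\epsilon^m_j$ for every $j$: the complement involution reverses the single--letter order $\text{a}<\text{b}<\text{c}$, so it carries the $j$th lexicographically--ordered (abc) state to the $j$th reverse--lexicographically--ordered (cba) state. Consequently the matching condition between the lower top state $\lambda^m_j$ and the upper bottom state $\epsilon^m_j$ is exactly ``equal index $j$'', so the sum over all intermediate labelings becomes the matrix product, giving the $(i,k)$--entry as $\sum_j (Y_{m \times q})_{ij}\,(A_m + B_m)_{jk}$. I would emphasize that the shared horizontal edge is now \emph{internal}, hence unrestricted: the index $j$ must range over all $3^m$ states, including those containing $\text{c}$, which is why the sum is taken over the full index set rather than only the $a/b$ states. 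By contrast, the genuine boundary $r$--states (and, implicitly, $l$--states) remain confined to $\{\text{a},\text{b}\}$ and simply concatenate into a length--$(q+1)$ boundary string, reproducing the defining summation condition of $Y_{m \times (q+1)}$.

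The main obstacle is precisely this ordering/complement bookkeeping: one must confirm that the two deliberately chosen orderings make complementary states share an index, and that the $a/b$ restriction is imposed only on the true boundary edges while the newly internalized horizontal edge is freed to take any of the three labels. Once the recurrence $Y_{m \times (q+1)} = Y_{m \times q}(A_m + B_m)$ is in hand, iterating from the base case yields $Y_{m \times n} = (A_m + B_m)^n$. The argument runs parallel to that of Lemma~4 in~\cite{OhV1}, with the MIS mosaic tiles of Figure~\ref{fig:tile} replacing the hard--square tiles used there.
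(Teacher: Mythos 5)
Your proposal is correct and takes essentially the same route as the paper: induction on the number of rows, splitting off the topmost bar mosaic, and using the fact that the abc/cba orderings make the $\text{a} \leftrightarrow \text{c}$ complement of the $j$th abc-ordered state equal to the $j$th cba-ordered state, so that the sum over internal horizontal edge labelings becomes exactly matrix multiplication by $A_m + B_m$. If anything, your write-up is more explicit than the paper's about this index-matching observation (which the paper asserts as ``the key point'' without justification) and about why the internalized edges range over all $3^m$ states while the true boundary edges stay confined to $\{\text{a},\text{b}\}$.
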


\begin{proof}
The following proof parallels the inductive proof of \cite[Lemma~4]{OhV1} with slight modification.
For $n=1$, $Y_{m \times 1} =A_m + B_m$
since $Y_{m \times 1}$ counts suitably adjacent $m \! \times \! 1$--mosaics 
with $l$-- and $r$--states consisting of letters a or b.
Assume that $Y_{m \times k} = (A_m + B_m)^k$.
Consider a suitably adjacent $m \! \times \! (k \! + \! 1)$--mosaic $M^{m \times (k+1)}$
with $r$-- and $l$--states consisting of letters a and b.
Split it into two suitably adjacent $m \! \times \! k$-- and $m \! \times \! 1$--mosaics
$M^{m \times k}$ and $M^{m \times 1}$ by tearing off the topmost bar mosaic.
There is a certain relation between the $t$--state of $M^{m \times k}$ 
and the $b$--state of $M^{m \times 1}$ as shown in Figure~\ref{fig:expand}.
To satisfy the adjacency rule, the letters a and c are changed by c and a, respectively, 
from one state to the other.
The key point is that, for some $r = 1, \dots, 3^m$, 
$s_t(M^{m \times k}) = \lambda^m_r$ and $s_b(M^{m \times 1}) = \epsilon^m_r$.

\begin{figure}[h]
\includegraphics{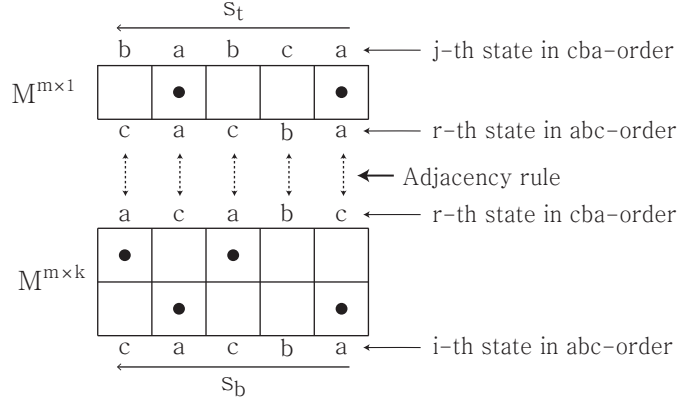}
\caption{Expanding $M^{m \times k}$ to $M^{m \times (k+1)}$}
\label{fig:expand}
\end{figure}

Let $Y_{m \times (k+1)} = (y_{ij})$, $Y_{m \times k} = (y_{ij}')$ and $Y_{m \times 1} = (y_{ij}'')$.
Note that $y_{ij}$ is the state polynomial 
for the set of suitably adjacent $m \! \times \! (k \! + \! 1)$--mosaics $M$ 
which admit splittings into $M^{m \times k}$ and $M^{m \times 1}$ satisfying
$s_b(M) = s_b(M^{m \times k}) = \epsilon^m_i$,
$s_t(M) = s_t(M^{m \times 1}) = \lambda^m_j$, 
and for some $r = 1, \dots, 3^m$,
$s_t(M^{m \times k}) = \lambda^m_r$ and $s_b(M^{m \times 1}) = \epsilon^m_r$.
Obviously, all their $l$-- and $r$--states consist of letters a and b.
Since all $3^m$ kinds of bar states arise as states of these $m$ horizontal adjacent edges,
$$ y_{ij} = \sum^{3^m}_{r=1} y_{ir}'  \cdot  y_{rj}''. $$
This implies
$$ Y_{m \times (k+1)} = Y_{m \times k} \cdot Y_{m \times 1} = (A_m + B_m)^{k+1}, $$
and the induction step is finished.
\end{proof}

\subsection{Stage 3: State matrix analyzing} \hspace{10mm}

\begin{proof}[Proof of Theorem~\ref{thm:main}.]
The $(i,j)$-entry of $Y_{m \times n}$ is the state polynomial for the set of 
suitably adjacent $m \! \times \! n$--mosaics $M$ with
$s_b(M) = \epsilon^m_i$, $s_t(M) = \lambda^m_j$ and $r$-- and $l$--states consisting of letters a and b.
According to the boundary state requirement,
MISs in $\mathbb{Z}_{m \times n}$ are converted into 
suitably adjacent $m \! \times \! n$--mosaics $M$ with $b$--, $t$--, $r$-- and $l$--states
consisting of letters a and b (but not c).
Thus the partition function $P_{m \times n}(z)$ is the sum of all entries of $Y_{m \times n}$ associated to 
$b$-- and $t$--states only consisting of letters a and b.

Now we define the two matrices $D_m$ and $E_m$ as in Theorem~\ref{thm:main}.
Obviously, $D_m \cdot Y_{m \times n}$ is the $1 \! \times \! 3^m$ matrix obtained from $Y_{m \times n}$ by
nullifying each $i$th row where the corresponding $i$th state in the abc-order has at least one letter of c,
followed by summing each column.
Again, $D_m \cdot Y_{m \times n} \cdot E_m$ is the $1 \! \times \! 1$ matrix 
obtained from $D_m \cdot Y_{m \times n}$ by
nullifying each $j$th column where the corresponding $j$th state in the cba-order has at least one letter of c,
followed by summing the unique row.
Therefore we get the partition function $P_{m \times n}(z)$ from the unique entry of
$D_m \cdot Y_{m \times n} \cdot E_m$.
This fact combined with Lemmas~\ref{lem:bar} and~\ref{lem:mn} completes the proof.
\end{proof}

\section{Maximal hard square entropy constant} \label{sec:growth}

We will need the following result called Fekete's lemma with slight modification.

\begin{lemma}{\cite[Lemma~7]{OhD1}}  \label{lem:growth}
If a double sequence $\{ a_{m,n} \}_{m, \, n \in \, \mathbb{N}}$ with $a_{m,n} \geq 1$ satisfies
$a_{m_1,n} \cdot a_{m_2,n} \leq a_{m_1 + m_2+1,n}$ and $a_{m,n_1} \cdot a_{m,n_2} \leq a_{m,n_1 + n_2+1}$
for all $m$, $m_1$, $m_2$, $n$, $n_1$ and $n_2$,
then
$$ \lim_{m, \, n \rightarrow \infty} (a_{m,n})^{\frac{1}{mn}} = 
\sup_{m, \, n \geq 1} (a_{m,n})^{\frac{1}{(m+1)(n+1)}},$$
provided that the supremum exists.
\end{lemma}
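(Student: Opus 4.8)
The plan is to prove Lemma~\ref{lem:growth} by reducing it to the ordinary (shift-free) two-dimensional supermultiplicative Fekete lemma and then establishing that reduced statement directly by a tiling argument. First I would pass to logarithms, setting $b_{m,n} = \log a_{m,n} \ge 0$, which is legitimate because $a_{m,n}\ge 1$; the two multiplicative hypotheses become the additive inequalities $b_{m_1,n}+b_{m_2,n}\le b_{m_1+m_2+1,n}$ and $b_{m,n_1}+b_{m,n_2}\le b_{m,n_1+n_2+1}$, and the claimed identity becomes $\lim_{m,n\to\infty} b_{m,n}/(mn) = \sup_{m,n\ge 1} b_{m,n}/((m+1)(n+1))$. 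To absorb the $+1$ appearing in each hypothesis, I would define the shifted array $g(p,q):=b_{p-1,q-1}$ for $p,q\ge 2$. Substituting $m_i=p_i-1$ and $n_j=q_j-1$ converts the hypotheses into honest double superadditivity, $g(p_1,q)+g(p_2,q)\le g(p_1+p_2,q)$ and $g(p,q_1)+g(p,q_2)\le g(p,q_1+q_2)$ for all indices $\ge 2$, and the target becomes $\lim g(p,q)/(pq)=\sup_{p,q\ge 2} g(p,q)/(pq)$, where the range $p,q\ge 2$ corresponds exactly to $m,n\ge 1$.

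Writing $L:=\sup_{p,q\ge 2} g(p,q)/(pq)$ (finite, by the hypothesis that the supremum exists), the bound $g(p,q)/(pq)\le L$ gives $\limsup_{p,q\to\infty} g(p,q)/(pq)\le L$ at once, so the real content is the matching lower bound on the $\liminf$. Here I would fix $\varepsilon>0$, choose a near-optimal block $(p_0,q_0)$ with $g(p_0,q_0)/(p_0q_0)>L-\varepsilon$, and iterate superadditivity in each coordinate to get $g(kp_0,lq_0)\ge kl\,g(p_0,q_0)$ for all $k,l\ge 1$. For a general large rectangle with $p\ge 2p_0$ and $q\ge 2q_0$, I would set $k=\lfloor p/p_0\rfloor\ge 2$, $l=\lfloor q/q_0\rfloor\ge 2$, and split $p=(k-1)p_0+r$ with $r\in[p_0,2p_0)$, and similarly $q=(l-1)q_0+s$ with $s\in[q_0,2q_0)$; since every block then has both side-lengths at least $2$, superadditivity together with $g\ge 0$ lets me discard the leftover blocks and conclude $g(p,q)\ge (k-1)(l-1)\,g(p_0,q_0)$. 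Dividing by $pq$ and using $(k-1)p_0/p\to 1$ and $(l-1)q_0/q\to 1$ as $p,q\to\infty$ yields $\liminf g(p,q)/(pq)\ge g(p_0,q_0)/(p_0q_0)>L-\varepsilon$; letting $\varepsilon\to 0$ gives $\liminf\ge L$, so the limit equals $L$.

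Finally I would translate back: since $g(p,q)/(pq)=b_{m,n}/((m+1)(n+1))$ with $m=p-1$ and $n=q-1$, the identity just proved for $g$ reads $\lim_{m,n\to\infty} b_{m,n}/((m+1)(n+1)) = \sup_{m,n\ge 1} b_{m,n}/((m+1)(n+1))$, and because $(m+1)(n+1)/(mn)\to 1$ the limit is unchanged when the normalization $(m+1)(n+1)$ is replaced by $mn$; exponentiating through $a_{m,n}=e^{b_{m,n}}$ then recovers the stated form. The one genuinely delicate point, and the step I expect to demand the most care, is the passage from exact multiples $(kp_0,lq_0)$ to arbitrary large rectangles: because the shifted superadditivity is valid only when both summand side-lengths are at least $2$, I cannot peel off remainders of length $0$ or $1$ directly, and the $(k-1)$–absorption above is precisely the device that keeps every block admissible while costing only a factor that tends to $1$.
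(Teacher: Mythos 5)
The paper never proves Lemma~\ref{lem:growth} itself: it is imported verbatim from the preprint \cite[Lemma~7]{OhD1}, so there is no internal argument to measure you against, and your proposal should be judged as a self-contained proof --- which it is, and a correct one. The chain of reductions is sound: $b_{m,n}=\log a_{m,n}\ge 0$ is legitimate since $a_{m,n}\ge 1$; the shift $g(p,q)=b_{p-1,q-1}$ converts the $+1$-shifted hypotheses into genuine coordinatewise superadditivity on indices $\ge 2$ (with $m_i=p_i-1$ one gets $b_{m_1+m_2+1,n}=g(p_1+p_2,q)$, exactly as you claim); and the sup over $p,q\ge 2$ of $g(p,q)/(pq)$ is precisely the sup in the statement. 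The step you flag as delicate is handled correctly: writing $p=(k-1)p_0+r$ with $r\in[p_0,2p_0)$ (possible since $k=\lfloor p/p_0\rfloor\ge 2$ when $p\ge 2p_0$) keeps every block of side length at least $2$, because $p_0,q_0\ge 2$ automatically, so superadditivity applies at each peeling and $g\ge 0$ lets you discard the remainders at the cost of the factor $(k-1)(l-1)p_0q_0/(pq)$, which tends to $1$ uniformly in the other variable --- the uniformity being exactly what the joint limit $m,n\to\infty$ requires, and your bound $(k-1)p_0/p\ge 1-2p_0/p$ delivers it. The final normalization swap between $(m+1)(n+1)$ and $mn$ is also justified, since $g(p,q)/(pq)\le L$ is bounded and the ratio of the two normalizations tends to $1$; exponentiating commutes with both the limit and the supremum because $e^x$ is continuous and increasing. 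In short, your write-up supplies the standard shift-and-tile two-dimensional Fekete argument that the paper outsources to an unpublished preprint; nothing in it would fail, and its only cost relative to the paper's citation is length.
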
 

\begin{proof}[Proof of Theorem~\ref{thm:growth}.]
Obviously, $\sigma(\mathbb{Z}_{m \times n})$ is at least 1 for all $m$, $n$.
For any two MIS $m_1 \! \times \! n$-- and $m_2 \! \times \! n$--mosaics,
we can always create a new MIS $(m_1 \! + \! m_2 \! + \! 1) \! \times \! n$--mosaic
by inserting a proper $1 \! \times \! n$--mosaic between them as in Figure~\ref{fig:growth}.
More precisely, we put $T_1$ tiles in all other places in each consecutive shaded region.
Note that all tiles in the $1 \! \times \! n$--mosaic whose neither left nor right neighbors
are $T_1$ tiles are shaded. 
Therefore $\sigma(\mathbb{Z}_{m_1 \times n}) \cdot \sigma(\mathbb{Z}_{m_2 \times n}) \leq 
\sigma(\mathbb{Z}_{(m_1+m_2+1) \times n})$, and similarly for the other index.
Since we use total sixteen mosaic tiles at each site,
$\sup_{m, \, n} (\sigma(\mathbb{Z}_{m \times n}))^{\frac{1}{(m+1)(n+1)}} \leq 16$, and now apply Lemma~\ref{lem:growth}.
\end{proof}

\begin{figure}[h]
\includegraphics{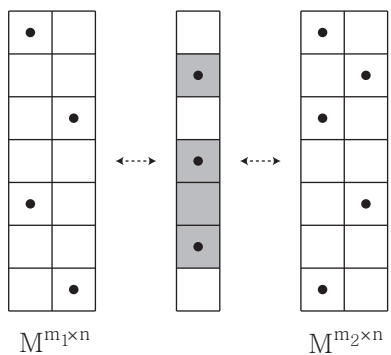}
\caption{Adjoining two MIS mosaics}
\label{fig:growth}
\end{figure}

\end{document}